\undefined \DeclareGraphicsRule{*}{eps}{*}{} \else
\newtheorem{theorem}{Theorem}[section]
\newtheorem{lemma}[theorem]{Lemma}
\newtheorem{prop}[theorem]{Proposition}
\newtheorem{conj}[theorem]{Conjecture}
\newtheorem{remark}[theorem]{Remark}
\newcommand{\F}{{\mathbb F}}
\begin{document}

\title{Davenport constant of the multiplicative semigroup of the quotient ring $\frac{\F_p[x]}{\langle f(x)\rangle}$}

\author{
Haoli Wang$^1$\thanks{Email:
bjpeuwanghaoli@163.com}  \ \ \ \ \ \
Lizhen Zhang$^{2,3}$\thanks{Corresponding author's email:
lzhzhang0@aliyun.com}  \ \ \ \ \ \
Qinghong Wang$^4$\thanks{Email: wqh1208@aliyun.com} \ \ \ \ \ \
Yongke Qu$^5$\thanks{Email:
yongke1239@163.com}
\\
\\
$^1$ {\small College of Computer and Information Engineering}\\
{\small Tianjin Normal University, Tianjin, 300387, P. R. China}\\
$^{2}${\small Department of Mathematics} \\
{\small Tianjin Polytechnic University, Tianjin, 300387, P. R. China}\\
$^{3}${\small Shanghai Institute of Applied Mathematics and Mechanics} \\
{\small Shanghai University, Shanghai, 200072, P. R. China}\\
$^{4}${\small College of Science}\\
{\small Tianjin University of Technology,
Tianjin, 300384, P. R. China}\\
$^{5}${\small Department of Mathematics}\\
{\small Luoyang Normal University,
Luoyang, 471022, P. R. China}
}

\date{}
\maketitle

\begin{abstract}  Let $\mathcal{S}$ be a finite commutative semigroup. The Davenport constant of $\mathcal{S}$, denoted $D(\mathcal{S})$, is defined to be the least positive integer $d$ such that every sequence $T$ of elements in $\mathcal{S}$ of length at least $d$ contains a subsequence $T'$ with the sum of all terms from $T'$ equaling the sum of all terms from $T$. Let $\F_p[x]$ be a polynomial ring
in one
variable over the prime field $\F_p$, and let $f(x)\in \F_p[x]$. In this paper, we made a study of the Davenport constant of the multiplicative semigroup of the quotient ring $\frac{\F_p[x]}{\langle f(x)\rangle}$. Among other results, we mainly prove that, for any prime $p>2$ and any polynomial $f(x)\in \F_p[x]$ which can be factorized into several pairwise non-associted irreducible polynomials in $\F_p[x]$, then $$D(\mathcal{S}_{f(x)}^p)=D(U(\mathcal{S}_{f(x)}^p)),$$
where $\mathcal{S}_{f(x)}^p$ denotes the multiplicative semigroup of the quotient ring $\frac{\F_p[x]}{\langle  f(x)\rangle}$ and $U(\mathcal{S}_{f(x)}^p)$ denotes the group of units of the semigroup $\mathcal{S}_{f(x)}^p$.
\end{abstract}

\noindent{\sl Key Words}: Davenport constant; Zero-sum; Finite commutative semigroups; Polynomial rings

\section {Introduction}

Let $G$ be an additive finite abelian group. A sequence $T$ of
elements in $G$ is called a {\sl zero-sum sequence} if the sum of
all terms of $T$ equals to zero, the identify element of $G$. The Davenport constant $D(G)$ of
$G$ is defined to be the smallest integer $d\in \mathbb{N}$ such that,
every sequence $T$ of $d$ elements in $G$ contains a nonempty
subsequence $T'$ with the sum of all terms of $T'$ equaling zero. H. Davenport \cite{Davenport} proposed
the study of this constant in 1965, which aroused a huge variety of further researches (see \cite{GaoGeroldingersurvey} for a survey). Recently, G.Q. Wang and W.D. Gao \cite{wanggao} generalized the Davenport constant to finite commutative semigroups.

 \noindent \textbf{Definition.} \cite{wanggao} \ {\sl Let $\mathcal{S}$ be a finite commutative semigroup. Let $T$ be a sequence of elements in $\mathcal{S}$. We call $T$ a reducible sequence if $T$ contains a proper subsequence $T'$ ($T'\neq T$) such that the sum of all terms of $T'$ equals the sum of all terms of $T$. Define the Davenport constant of the semigroup $\mathcal{S}$, denoted $ D(\mathcal{S})$, the smallest integer $d$ such that every sequence $T$ of length at least $d$ of elements in $\mathcal{S}$ is reducible.}

Moreover, the above two authors together with S.D. Adhikari also made a study of some related additive problems in semigroups (see \cite{AdhikariGaoWang14} and \cite{wang}).
Motivated by their pioneering work on additive problems in semigroups, we study the Davenport constant for the multiplicative semigroup of the quotient ring of a  polynomial ring
in one
variable over the prime field $\F_p$. Our main result is as follows.

\begin{theorem}\label{Theorem irreducible quotient ring}
\ For any prime $p>2$, let $f(x)\in \F_p[x]$ such that $f(x)$ can be factorized into several irreducible polynomials which are not associated each other. Then $$D(\mathcal{S}_{f(x)}^p)=D(U(\mathcal{S}_{f(x)}^p)),$$
where $\mathcal{S}_{f(x)}^p$ denotes the multiplicative semigroup of the quotient ring $\frac{\F_p[x]}{\langle  f(x)\rangle}$.
\end{theorem}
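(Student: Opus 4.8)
The plan is to pass to an explicit model of the semigroup via the Chinese Remainder Theorem and then reduce everything to the behaviour of the Davenport constant on the group of units. Since $f(x)$ is a product of pairwise non-associated irreducible polynomials $g_1(x),\ldots,g_k(x)$, these are pairwise coprime, so $\frac{\F_p[x]}{\langle f(x)\rangle}\cong\prod_{i=1}^{k}\frac{\F_p[x]}{\langle g_i(x)\rangle}\cong\prod_{i=1}^{k}\F_{q_i}$ with $q_i=p^{\deg g_i}$. Hence $\mathcal S_{f(x)}^p$ is isomorphic to the direct product $\prod_{i=1}^{k}(\F_{q_i},\cdot)$ of the multiplicative monoids of finite fields, and $U(\mathcal S_{f(x)}^p)\cong\prod_{i=1}^{k}\F_{q_i}^{*}\cong\prod_{i=1}^{k}C_{q_i-1}$. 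Crucially, $p>2$ forces $q_i\ge p\ge 3$ for every $i$. I would also record two elementary facts for a finite abelian group $G$ regarded as a semigroup: writing $\mathsf d(G)$ for the usual group Davenport constant, one has $D(G)=\mathsf d(G)+1$, and every sequence over $G$ of length at least $\mathsf d(G)$ contains a nonempty subsequence whose product is the identity.

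The inequality $D(\mathcal S_{f(x)}^p)\ge D(U(\mathcal S_{f(x)}^p))$ is the easy half. An equation $\prod_{T'}=\prod_{T}$ between products of units holds in $\mathcal S_{f(x)}^p$ if and only if it holds in the group $U(\mathcal S_{f(x)}^p)$; thus a sequence over the unit group that is irreducible in the group remains irreducible in the whole semigroup. Taking a minimal product-identity sequence over $U(\mathcal S_{f(x)}^p)$ of maximal length $\mathsf d(U(\mathcal S_{f(x)}^p))$ (which exists and is irreducible) produces an irreducible sequence in $\mathcal S_{f(x)}^p$ of length $\mathsf d(U(\mathcal S_{f(x)}^p))=D(U(\mathcal S_{f(x)}^p))-1$, giving the bound.

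For the hard half $D(\mathcal S_{f(x)}^p)\le D(U(\mathcal S_{f(x)}^p))$, take any sequence $T=(a_1,\ldots,a_n)$ with $n\ge D(U(\mathcal S_{f(x)}^p))$ and product $P=a_1\cdots a_n$. Let $Z(a)$ denote the set of coordinates at which $a$ vanishes, let $Z=\bigcup_j Z(a_j)$, and note $Z$ is exactly the set of coordinates where $P$ vanishes. The structural observation is that $Z(a_j)\subseteq Z$ for every $j$, so restricting $T$ to the coordinates outside $Z$ yields a sequence of \emph{units}, landing in the group $G_e\cong\prod_{i\notin Z}C_{q_i-1}$. The plan is then: first choose a covering subsequence $W$ with $|W|\le|Z|$ meeting a zero in each coordinate of $Z$ (possible since $P_i=0$ for $i\in Z$); then, among the at least $n-|Z|$ remaining terms, find a nonempty subsequence $R$ whose restriction to the coordinates outside $Z$ has product the identity of $G_e$. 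Deleting $R$ leaves a proper subsequence whose product still vanishes on all of $Z$ (the covering $W$ is untouched) and is unchanged off $Z$ (division by the trivial factor $\prod_{j\in R}(a_j)_i=1$ is legitimate, all entries being units there); hence its product equals $P$, and $T$ is reducible.

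The crux, and the main obstacle, is guaranteeing that $R$ exists after removing the covering terms, i.e.\ that $n-|Z|\ge\mathsf d(G_e)$. This is precisely where $p>2$ and the arithmetic of Davenport constants enter. Using super-additivity $\mathsf d\!\left(\prod_{i=1}^{k}C_{q_i-1}\right)\ge\mathsf d(G_e)+\mathsf d\!\left(\prod_{i\in Z}C_{q_i-1}\right)-1$ together with the standard lower bound $\mathsf d\!\left(\prod_{i\in Z}C_{q_i-1}\right)\ge 1+\sum_{i\in Z}(q_i-2)$ and the inequality $q_i-2\ge 1$ (valid exactly because $q_i\ge 3$, i.e.\ $p>2$), I obtain $\mathsf d(U(\mathcal S_{f(x)}^p))\ge\mathsf d(G_e)+|Z|$, whence $n-|Z|\ge D(U(\mathcal S_{f(x)}^p))-|Z|=\mathsf d(U(\mathcal S_{f(x)}^p))+1-|Z|\ge\mathsf d(G_e)+1$. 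Thus $R$ exists and the argument closes. I expect the delicate points to be the bookkeeping that $W$ and $R$ can be chosen disjointly and that the counting is just tight enough; the hypothesis $p>2$ is indispensable, since a linear factor at $p=2$ contributes a trivial unit factor $C_1$ for which $q_i-2\ge 1$ fails and the whole estimate collapses.
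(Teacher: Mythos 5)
Your proof is correct and takes essentially the same route as the paper: after the identical Chinese Remainder Theorem reduction, the paper's key lemma on products of cyclic groups with adjoined zeros likewise covers the coordinates where the total product vanishes by at most $|Z|$ terms, finds a product-one subsequence among the remaining terms projected onto the unit group of the surviving coordinates, and closes the count because each discarded factor $C_{q_i-1}$ has Davenport constant at least $2$ (exactly your use of $q_i-2\ge 1$, which is where $p>2$ enters). One blemish: your two auxiliary inequalities mix the $\mathsf{d}$ and $D$ conventions off by one (superadditivity of $\mathsf{d}$ holds without the $-1$, while the lower bound $1+\sum_{i\in Z}(q_i-2)$ is the bound for $D$, not $\mathsf{d}$, and is false for $|Z|=1$ as you state it); the two slips cancel, so the conclusion $\mathsf{d}\bigl(U(\mathcal{S}_{f(x)}^p)\bigr)\ge\mathsf{d}(G_e)+|Z|$ and the rest of the argument remain valid.
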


Moreover, in the final concluding section, we conjecture that $$D(\mathcal{S}_{f(x)}^p)=D(U(\mathcal{S}_{f(x)}^p))$$ holds for any prime $p>2$ and any non-constant polynomial $f(x)\in \F_p[x]$, and in particular, we verify it for the case of $f(x)=(x+1)^2$.

\section{The proof of Theorem \ref{Theorem irreducible quotient ring}}

We begin this section by giving some preliminaries.

Let $\mathcal{S}$ be a finite commutative semigroup.
The operation on $\mathcal{S}$ is denoted by $+$.
The identity element of $\mathcal{S}$, denoted $0_{\mathcal{S}}$ (if exists), is the unique element $e$ of
$\mathcal{S}$ such that $e+a=a$ for every $a\in \mathcal{S}$. If $\mathcal{S}$ has an identity element $0_{\mathcal{S}}$, let
$$U(\mathcal{S})=\{a\in \mathcal{S}: a+a'=0_{\mathcal{S}} \mbox{ for some }a'\in \mathcal{S}\}$$ be the group of units
of $\mathcal{S}$.
The zero element of $\mathcal{S}$, denoted
$\infty_{\mathcal{S}}$ (if exists), is the unique element $z$ of $\mathcal{S}$ such that $z+a=z$ for every
$a\in \mathcal{S}$.
Let $$T=x_1x_2\cdot\ldots\cdot x_n=\prod\limits_{x\in \mathcal{S}} x^{\ {\rm v}_x(T)},$$  is a sequence
of elements in the semigroup $\mathcal{S}$, where ${\rm v}_x(T)$ denotes the multiplicity of $x$ in the sequence $T$.
Let $T_1,T_2\in
\mathcal{F}(\mathcal{S})$ be two sequences on $\mathcal{S}$. We call $T_2$
a subsequence of $T_1$ if $${\rm v}_x(T_2)\leq {\rm v}_x(T_1)$$ for every element $x\in \mathcal{S}$, in particular, if $T_2\neq T_1$, we call $T_2$ a {\bf proper} subsequence of $T_1$, and write $$T_3=T_1 \cdot T_2^{-1}$$ to mean the unique subsequence of $T_1$ with $T_2\cdot T_3=T_1$.  By $\lambda$ we denote the
empty sequence.
If $S$ has an identity element $0_{\mathcal{S}}$,  we allow $T=\lambda$ to be empty and adopt the convention
that $\sigma(\lambda)=0_\mathcal{S}$.
We say that $T$ is {\it
reducible} if $\sigma(T')=\sigma(T)$ for some proper subsequence $T'$ of $T$
(Note that, $T'$ is probably the empty sequence $\lambda$ if $\mathcal{S}$
has the identity element $0_{\mathcal{S}}$ and $\sigma(T)=0_{\mathcal{S}}$). Otherwise, we call $T$
{\it irreducible}. For more related terminology used in additive problems in semigroups, one is refereed to \cite{wang}.

\begin{lemma}(\cite{GH}, Lemma 6.1.3) \label{Lemma recusive Davenport constant} \ Let $G$ be a finite abelian group, and let $H$ be a subgroup of $G$. Then, $D(G)\geq D(G/H)+D(H)-1$.
\end{lemma}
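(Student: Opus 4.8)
The plan is to establish the inequality by producing a single sequence over $G$ that contains no nonempty zero-sum subsequence and has length exactly $D(G/H)+D(H)-2$; since $D(G)$ is, by definition, one more than the maximal length of a sequence over $G$ having no nonempty zero-sum subsequence, this immediately yields $D(G)\geq D(G/H)+D(H)-1$. So the whole proof reduces to a construction and an irreducibility check.

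First I would record two extremal inputs. Applying the definition of the Davenport constant to the quotient $G/H$, there is a sequence $\overline{U}$ over $G/H$ of length $D(G/H)-1$ that contains no nonempty zero-sum subsequence. Applying it to $H$, there is a sequence $V$ over $H$ of length $D(H)-1$ with no nonempty zero-sum subsequence; since $H$ is a subgroup of $G$, I may view $V$ as a sequence over $G$. Next I would lift $\overline{U}$: for each term $\overline{g}$ of $\overline{U}$ choose some $g\in G$ with $g+H=\overline{g}$, and let $U$ be the resulting sequence over $G$, so that $|U|=D(G/H)-1$ and the canonical image of $U$ in $G/H$ is $\overline{U}$. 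Then I form the concatenation $S=U\cdot V$, a sequence over $G$ of length $(D(G/H)-1)+(D(H)-1)=D(G/H)+D(H)-2$.

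The heart of the argument is to verify that $S$ has no nonempty zero-sum subsequence. Suppose, for contradiction, that $W$ is a nonempty subsequence of $S$ with $\sigma(W)=0$, and split it as $W=W_1\cdot W_2$, where $W_1$ is a subsequence of $U$ and $W_2$ a subsequence of $V$. Passing to the quotient, every term of $W_2$ lies in $H$ and so has trivial image in $G/H$; hence the image of $W_1$ is a zero-sum subsequence of $\overline{U}$. Because $\overline{U}$ has no nonempty zero-sum subsequence, $W_1$ must be empty. But then $W=W_2$ is a nonempty subsequence of $V$ with $\sigma(W_2)=0$ lying in $H$, contradicting the choice of $V$. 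Therefore $S$ contains no nonempty zero-sum subsequence, and the stated bound follows at once.

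The only genuinely delicate point, and what I would flag as the main obstacle, is the bookkeeping in the projection step: one must be sure that reducing modulo $H$ kills exactly the $V$-part $W_2$ (which is immediate once $V$ is chosen inside $H$) and that the noncanonical choice of representatives for the lift $U$ is irrelevant, since the contradiction is extracted purely from the two zero-sum-free hypotheses rather than from any property of the lifts. A preliminary remark worth stating cleanly, so that the definitions mesh without friction, is that for a sequence of group elements the paper's notion of being reducible is equivalent to possessing a nonempty zero-sum subsequence; this is what licenses the identity ``$D(G)=1+{}$(maximal zero-sum-free length)'' used throughout the construction.
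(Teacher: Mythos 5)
Your proof is correct. Note that the paper itself offers no proof of this lemma---it is simply quoted from Geroldinger and Halter-Koch (\cite{GH}, Lemma 6.1.3)---and your argument (lift a zero-sum-free sequence of length $D(G/H)-1$ from the quotient, append a zero-sum-free sequence of length $D(H)-1$ inside $H$, and verify that the concatenation is zero-sum free by projecting modulo $H$) is precisely the standard construction used to prove it in that reference, including the correct reduction of the paper's ``reducible sequence'' definition to the classical zero-sum formulation for groups.
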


For any finite commutative semigroup $\mathcal{S}$ with identity $0_{\mathcal{S}}$, since $U(\mathcal{S})$ is a nonempty subsemigroup of $\mathcal{S}$, we have the following.

\begin{lemma}\label{proposition D(U(G))leq D(G)} (see \cite{wanggao}, Proposition 1.2) \
Let $\mathcal{S}$ be a finite commutative semigroup with identity. Then $D(U(\mathcal{S}))\leq
D(\mathcal{S})$.
\end{lemma}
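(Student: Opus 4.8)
The plan is to exploit the fact that the group of units $U(\mathcal{S})$ is a subsemigroup of $\mathcal{S}$ carrying the \emph{same} operation, so that reducibility over $\mathcal{S}$ descends to reducibility over $U(\mathcal{S})$ whenever all terms happen to lie in $U(\mathcal{S})$. Concretely, I would set $d = D(\mathcal{S})$ and show directly that every sequence over $U(\mathcal{S})$ of length at least $d$ is reducible \emph{as a sequence over $U(\mathcal{S})$}; by the definition of the semigroup Davenport constant this immediately yields $D(U(\mathcal{S})) \leq d = D(\mathcal{S})$.

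First I would take an arbitrary sequence $T$ of terms in $U(\mathcal{S})$ with $|T| \geq d$. Since $U(\mathcal{S}) \subseteq \mathcal{S}$, the sequence $T$ is also a sequence over $\mathcal{S}$, and as $|T| \geq d = D(\mathcal{S})$, it is reducible over $\mathcal{S}$; that is, there is a proper subsequence $T'$ of $T$ with $\sigma(T') = \sigma(T)$, where both sums are computed in $\mathcal{S}$. Next I would observe that, because every term of $T$ (and hence of $T'$) lies in the subsemigroup $U(\mathcal{S})$, both sums $\sigma(T')$ and $\sigma(T)$ already lie in $U(\mathcal{S})$ and are formed using the operation of $U(\mathcal{S})$, which is merely the restriction of the operation of $\mathcal{S}$. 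Consequently the identity $\sigma(T') = \sigma(T)$ holds verbatim when interpreted over $U(\mathcal{S})$, so the same $T'$ witnesses that $T$ is reducible over $U(\mathcal{S})$. Since $T$ was an arbitrary sequence over $U(\mathcal{S})$ of length at least $d$, the inequality follows.

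The only point requiring care — and the closest thing to an obstacle — is the bookkeeping around identities and the empty subsequence. One must note that $0_{\mathcal{S}}$ is its own inverse, since $0_{\mathcal{S}} + 0_{\mathcal{S}} = 0_{\mathcal{S}}$, hence $0_{\mathcal{S}} \in U(\mathcal{S})$ and the identity of the subsemigroup $U(\mathcal{S})$ coincides with $0_{\mathcal{S}}$. This guarantees that the convention $\sigma(\lambda) = 0_{\mathcal{S}}$ is the same whether read in $\mathcal{S}$ or in $U(\mathcal{S})$, so that even the borderline case $T' = \lambda$ transfers faithfully. With this compatibility in hand no computation is needed: the argument is simply the transfer of a single reducibility witness from the ambient semigroup down to the subsemigroup of units.
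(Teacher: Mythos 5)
Your proof is correct, and it is essentially the argument the paper relies on: the paper gives no proof of its own but cites \cite{wanggao} with the remark that $U(\mathcal{S})$ is a nonempty subsemigroup of $\mathcal{S}$, which is precisely your transfer-of-witness argument (restrict the ambient reducibility witness to the subsemigroup, where the operation agrees). Your extra care about $0_{\mathcal{S}} \in U(\mathcal{S})$ and the empty-subsequence convention $\sigma(\lambda)=0_{\mathcal{S}}$ correctly closes the only borderline case.
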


\begin{lemma}\label{Lemma product of cyclic semigroups} Let $k\geq 1$, and let $n_1,n_2,\ldots,n_k\geq 2$ be positive integers.  Let $\mathcal{S}_i=C_{n_i}\cup \{\infty_i\}$ be a semigroup obtained by a cyclic group of order $n_i$ adjoined with a zero element $\infty_i$ for each $i\in [1,k]$. Let $\mathcal{S}=\mathcal{S}_1\times \mathcal{S}_2\times \cdots \times\mathcal{S}_k$ be the product of $\mathcal{S}_1,\mathcal{S}_2,\ldots,\mathcal{S}_k$. Then $D(\mathcal{S})=D(U(\mathcal{S}))$.
\end{lemma}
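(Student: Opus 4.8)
The plan is to establish the two inequalities $D(U(\mathcal{S}))\le D(\mathcal{S})$ and $D(\mathcal{S})\le D(U(\mathcal{S}))$ separately. The first is immediate from Lemma \ref{proposition D(U(G))leq D(G)}: the semigroup $\mathcal{S}$ has the identity $(0_1,\ldots,0_k)$ (with $0_i$ the identity of $C_{n_i}$), and since $\infty_i$ is never invertible one checks directly that $U(\mathcal{S})=U(\mathcal{S}_1)\times\cdots\times U(\mathcal{S}_k)=C_{n_1}\times\cdots\times C_{n_k}$, so $D(U(\mathcal{S}))$ is the Davenport constant of a product of cyclic groups. All the content lies in the reverse inequality $D(\mathcal{S})\le D(U(\mathcal{S}))$, for which I would fix an arbitrary sequence $T$ over $\mathcal{S}$ with $|T|\ge D(U(\mathcal{S}))$, writing each term $a$ of $T$ as a $k$-tuple with coordinates $a_i\in\mathcal{S}_i$, and produce a proper subsequence of $T$ with the same sum.

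First I would record where the infinite coordinates sit. Let $I=\{i\in[1,k]: a_i=\infty_i\text{ for some term }a\text{ of }T\}$ and $J=[1,k]\setminus I$. By the absorbing property of the zero elements, $\sigma(T)_i=\infty_i$ precisely for $i\in I$, whereas for $i\in J$ every term has its $i$-th coordinate in $C_{n_i}$ and $\sigma(T)_i$ is a genuine group element. The idea is that coordinates in $I$ only need to be kept alive: if, for each $i\in I$, I retain at least one term carrying $\infty_i$ in position $i$, then any further removal of terms leaves the $I$-coordinates of the sum equal to $\sigma(T)$. Accordingly I would choose a subsequence $W$ of $T$ such that for every $i\in I$ some term of $W$ has $\infty_i$ in coordinate $i$; a greedy choice of one witness per coordinate gives $|W|\le |I|$.

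It then remains to absorb the finite coordinates. On the complementary sequence $T\cdot W^{-1}$ all $J$-coordinates are group elements, so I would project it into the group $H_J:=\prod_{i\in J}C_{n_i}$ and extract a nonempty zero-sum subsequence $R$; removing $R$ leaves every $J$-coordinate of the sum unchanged, while, because $R\subseteq T\cdot W^{-1}$ avoids $W$, all witnesses for $I$ survive. Then $T'=T\cdot R^{-1}$ is a proper subsequence with $\sigma(T')=\sigma(T)$, so $T$ is reducible. For this step to be legitimate I need $|T\cdot W^{-1}|=|T|-|W|\ge D(H_J)$. This is where the counting enters: applying Lemma \ref{Lemma recusive Davenport constant} to $G=U(\mathcal{S})$ with subgroup $H=\prod_{i\in I}C_{n_i}$ (so that $G/H\cong H_J$) yields
$$D(U(\mathcal{S}))\ge D(H_J)+D\Big(\prod_{i\in I}C_{n_i}\Big)-1\ge D(H_J)+|I|,$$
where the last step uses the elementary bound $D\big(\prod_{i\in I}C_{n_i}\big)\ge 1+\sum_{i\in I}(n_i-1)\ge 1+|I|$, valid since each $n_i\ge 2$. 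Combined with $|W|\le|I|$ this gives $|T|-|W|\ge D(U(\mathcal{S}))-|I|\ge D(H_J)$, as needed.

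The main obstacle is precisely this length bookkeeping: one must guarantee that after reserving up to $|I|$ terms as witnesses for the infinite coordinates, enough terms remain to force a zero-sum in the finite coordinates. The recursive inequality of Lemma \ref{Lemma recusive Davenport constant}, together with the standard lower bound for the Davenport constant of a product of cyclic groups, is exactly what makes the reserved witnesses cheap enough, and I expect checking that these two estimates combine to give $D(U(\mathcal{S}))-|I|\ge D(H_J)$ to be the crux. The degenerate cases fall out of the same computation: when $I=\emptyset$ the sequence $T$ already lies in $U(\mathcal{S})$ and one applies the group Davenport constant directly, while when $J=\emptyset$ the group $H_J$ is trivial, the zero-sum condition is vacuous, and one simply discards any single surplus term.
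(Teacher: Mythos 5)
Your proof is correct and follows essentially the same route as the paper's: your witness subsequence $W$ for the coordinates in $I$, the projection onto $H_J=\prod_{i\in J}C_{n_i}$, and the extracted zero-sum subsequence $R$ correspond exactly to the paper's $V$, $\psi_L$, and $W$, with the same length bookkeeping via Lemma \ref{Lemma recusive Davenport constant}. The only cosmetic difference is that you apply that lemma once with $H=\prod_{i\in I}C_{n_i}$ plus the elementary bound $D\bigl(\prod_{i\in I}C_{n_i}\bigr)\geq 1+|I|$, whereas the paper iterates the lemma coordinate by coordinate using $D(C_{n_i})\geq 2$.
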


 \begin{proof} \ By Lemma \ref{proposition D(U(G))leq D(G)}, we need only to show that
$$D(\mathcal{S})\leq D(U(\mathcal{S})).$$ Observe that $$U(\mathcal{S})=U(\mathcal{S}_1)\times \cdots \times U(\mathcal{S}_k)=C_{n_1}\times \cdots \times C_{n_k}.$$
Let $g_i$ be the generator of the cyclic $C_{n_i}$. Let ${\bf a}=(a_1,a_2,\ldots,a_k)$ be an element of $\mathcal{S}$. We define $$\mathcal{J}({\bf a})=\{i\in [1,k]:a_i=\infty_i\}.$$
We see that for each $i\in [1,k]$, either $a_i=\infty_i$ or $a_i=m_i g_i$ where $m_i\in[1,n_i]$,  and moreover, ${\bf a}\in  U(\mathcal{S})$ if and only if $\mathcal{J}({\bf a})=\emptyset$.

For any index set $I\subseteq [1,k]$, let $\psi_{I}$ be the canonical epimorphism of $\mathcal{S}$ onto the semigroup $\prod\limits_{i\in I}\mathcal{S}_i$ given by
$$\psi_{I}({\bf x})=(x_1',x_2',\ldots,x_{k}')$$ with $$x_i'=0 \mbox{ for } i\in I$$ and $$x_i'=x_i \mbox{ for } i\in [1,k]\setminus I,$$
where ${\bf x}=(x_1,x_2,\ldots,x_k)$ denotes an arbitrary element of $\mathcal{S}$. Note that
$\psi_{I}(\mathcal{S})$ is a subsemigroup of $\mathcal{S}$ which is isomorphic to the semigroup $\prod\limits_{i\in I}\mathcal{S}_i$, the product of the semigroups $\mathcal{S}_i$ with $i\in I$.

Now take an arbitrary sequence $T$ of elements of $\mathcal{S}$ of length at least $U(\mathcal{S})$.
By applying Lemma \ref{Lemma recusive Davenport constant} recursively, we have that
\begin{align}\label{equation |T|geq k+1}
\begin{array}{llll}
|T|&\geq & D(\mathcal{S}) \\
&\geq & D(U(\mathcal{S}))\\
&=& D(\prod\limits_{i\in [1,k]} C_{n_i})\\
&\geq& D(\prod\limits_{i\in [1,k-1]} C_{n_i})+D(C_{n_k})-1\geq D(\prod\limits_{i\in [1,k-1]} C_{n_i})+1\\
&\geq & D(\prod\limits_{i\in [1,k-2]} C_{n_i})+2\\
&\vdots& \\
&\geq & D(C_{n_1})+(k-1)\\
&\geq & 2+(k-1)\\
&=& k+1.\\
\end{array}
\end{align}

It suffices to show that $T$ contains a proper subsequence $T'$ with $\sigma(T')=\sigma(T)$.

Suppose first that all the terms of $T$ are from $U(\mathcal{S})$, i.e., $\mathcal{J}({\bf x})= \emptyset$ for each term ${\bf x}$  of $T$. Since $|T|\geq D(U(\mathcal{S}))$, it follows that $T$ contains a {\bf nonempty} subsequence $V$ with $\sigma(V)=0_{\mathcal{S}}$, i.e., the sum of all terms from $V$ is the identity element of $\mathcal{S}$. This implies that $\sigma(T V^{-1})=\sigma(T V^{-1})+0_{\mathcal{S}}=\sigma(T V^{-1})+\sigma(V)=\sigma(T)$. Then $T'=T V^{-1}$ shall be the required proper subsequence of $T$, we are done.
Hence, we assume that not all the terms of $T$ are from $U(\mathcal{S})$, that is, $$\mathcal{J}(\sigma(T))\neq \emptyset.$$

Note that for each $i\in \mathcal{J}(\sigma(T))$, there exists at least one term of $T$, say ${\bf a_i}$, such that $$i\in \mathcal{J}({\bf a_i}).$$ It follows that there exists
a nonempty subsequence $V$ of $T$ of length at most $|\mathcal{J}(\sigma(T))|$ such that $$\mathcal{J}(\sigma(V))=\mathcal{J}(\sigma(T)).$$
Let $$L=[1,k]\setminus \mathcal{J}(\sigma(T)).$$
Note that $\mathcal{\psi_{L}}(TV^{-1})$ is a sequence of elements in $U(\psi_{L}(\mathcal{S}))\cong U(\prod\limits_{i\in L} \mathcal{S}_i)=\prod\limits_{i\in L} C_{n_i}$. By \eqref{equation |T|geq k+1}, we have that $$|TV^{-1}|\geq 1.$$
By applying Lemma \ref{Lemma recusive Davenport constant} recursively, we have that
$$\begin{array}{llll}
D(U(\psi_{L}(\mathcal{S})))&=&D(\prod\limits_{i\in L} C_{n_i}) \\
&\leq & D(\prod\limits_{i\in [1,k]} C_{n_i})-(k-|L|)\\
&=& D(\prod\limits_{i\in [1,k]} C_{n_i})- |\mathcal{J}(\sigma(T))|\\
&\leq& D(\prod\limits_{i\in [1,k]} C_{n_i})- |V|\\
&=& D(U(\mathcal{S}))- |V|\\
&\leq & |T|- |V|\\
&=& |TV^{-1}|\\
&=& |\psi_{L}(TV^{-1})|.\\
\end{array}$$
It follows that $TV^{-1}$ contains a  {\bf nonempty} subsequence $W$ such that $\sigma(\psi_{L}(W))$ is the identity element of the group $U(\psi_{L}(\mathcal{S}))$, i.e., $$\sigma(\psi_{L}(W))=0_{U(\psi_{L}(\mathcal{S}))}=0_{\mathcal{S}}.$$ Since $V\mid TW^{-1}$, it follows that $\mathcal{J}(\sigma(TW^{-1}))=\mathcal{J}(\sigma(V))=\mathcal{J}(\sigma(T)),$ which implies that
$$\sigma(TW^{-1})+\sigma(\psi_{L}(W))=\sigma(TW^{-1})+\sigma(W).$$
Then we have that
$$\sigma(TW^{-1})=\sigma(TW^{-1})+0_{\mathcal{S}}=\sigma(TW^{-1})+\sigma(\psi_{L}(W))=\sigma(TW^{-1})+\sigma(W)=\sigma(T),$$
and thus,  $T'=TW^{-1}$ is the required proper subsequence of $T$, we are done.
\end{proof}

\bigskip

\noindent {\bf Proof of Theorem \ref{Theorem irreducible quotient ring}} \
Let $$f(x)=f_1(x) f_2(x)\cdots f_k(x)$$ where $f_1(x), f_2(x), \ldots,f_k(x)\in \F_p[x]$ are irreducible and do not associate each other. By the Chinese Remainder Theorem, we have $$\frac{\F_p[x]}{\langle  f(x)\rangle}\cong \frac{\F_p[x]}{\langle  f_1(x)\rangle} \times  \frac{\F_p[x]}{\langle  f_2(x)\rangle} \times \cdots \times \frac{\F_p[x]}{\langle  f_k(x)\rangle}.$$
It follows that the multiplicative semigroup $\mathcal{S}_{f(x)}^p$ of the ring $\frac{\F_p[x]}{\langle  f(x)\rangle}$, is isomorphic to the product of the multiplicative semigroups of $\frac{\F_p[x]}{\langle  f_1(x)\rangle},\frac{\F_p[x]}{\langle  f_2(x)\rangle},\ldots,\frac{\F_p[x]}{\langle  f_k(x)\rangle}$, i.e.,
$$\mathcal{S}_{f(x)}^p\cong \mathcal{S}_{f_1(x)}^p\times \mathcal{S}_{f_2(x)}^p\times \cdots\times \mathcal{S}_{f_k(x)}^p.$$
Since the polynomial $f_i(x)$ is irreducible for each $i\in [1,k]$, we have $\frac{\F_p[x]}{\langle  f_i(x)\rangle}$ is a finite field, and thus, the semigroup $\mathcal{S}_{f_i(x)}^p$ is a cyclic group adjoined with a zero element. Then the conclusion follows from Lemma \ref{Lemma product of cyclic semigroups} immediately.
\qed

\section{Concluding remarks}

In the final section, we propose the further research by suggesting the following conjecture.

\begin{conj}\label{Conjecture 1} \ For any prime $p>2$, let $f(x)\in \F_p[x]$ with ${\rm deg}(f(x))\geq 1$.
Then $$D(\mathcal{S}_{f(x)}^p)=D(U(\mathcal{S}_{f(x)}^p)).$$
\end{conj}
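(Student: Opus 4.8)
The plan is to reduce to the local situation via the Chinese Remainder Theorem, exactly as in the proof of Theorem \ref{Theorem irreducible quotient ring}, and then to replace the crude ``group or zero'' structure exploited in Lemma \ref{Lemma product of cyclic semigroups} by the full valuation filtration of a chain ring. Write $f(x)=p_1(x)^{e_1}\cdots p_k(x)^{e_k}$ with the $p_i(x)$ pairwise non-associated irreducibles, so that $\mathcal{S}_{f(x)}^p\cong \mathcal{S}_1\times\cdots\times\mathcal{S}_k$, where $\mathcal{S}_i$ is the multiplicative semigroup of the finite chain ring $R_i=\frac{\F_p[x]}{\langle p_i(x)^{e_i}\rangle}$ with maximal ideal $\mathfrak{m}_i=\langle p_i(x)\rangle$ and residue field of size $q_i=p^{\deg p_i}$. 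Since $U(\mathcal{S}_{f(x)}^p)=\prod_i U(R_i)$ and $U(R_i)\cong \F_{q_i}^{*}\times(1+\mathfrak{m}_i)$ with $1+\mathfrak{m}_i$ a finite abelian $p$-group, by Lemma \ref{proposition D(U(G))leq D(G)} it suffices to prove the reverse inequality $D(\mathcal{S}_{f(x)}^p)\le D(U(\mathcal{S}_{f(x)}^p))$.

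For a sequence $T$ over $\mathcal{S}_{f(x)}^p$ of length at least $D(U(\mathcal{S}_{f(x)}^p))$ I would stratify the coordinates of $\sigma(T)$ by the $\mathfrak{m}_i$-adic valuation $v_i\in\{0,1,\ldots,e_i-1\}\cup\{\infty\}$ of $\sigma(T)_i$. The warm-up is the single-factor case $k=1$, which already displays the mechanism. If $v=0$ every term is a unit and one removes a nonempty unit subsequence with product $1$, as in Lemma \ref{Lemma product of cyclic semigroups}. If $v=\infty$ the product is zero, the total valuation is at least $e$, and discarding the term of smallest valuation keeps the total at least $e$; this step is where the hypothesis $p>2$ is essential, since it forces $D(U(R))\ge D(1+\mathfrak{m})\ge 1+(e-1)(p-1)>e$ and hence leaves room to discard (for $p=2$ and $f=(x+1)^2$ the sequence $T=\overline{(x+1)}\cdot\overline{(x+1)}$ is irreducible, so the restriction $p>2$ cannot be dropped). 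The informative case is intermediate valuation $1\le v\le e-1$: here the total valuation equals $v$, only finitely many terms are non-units, and after keeping those few non-unit terms one must reproduce the surviving unit part modulo $\mathfrak{m}^{\,e-v}$. That amounts to extracting a nonempty zero-sum subsequence in the quotient group $U(R)/(1+\mathfrak{m}^{\,e-v})$, and the recursive bound of Lemma \ref{Lemma recusive Davenport constant} together with Olson's formula for the Davenport constant of an abelian $p$-group gives $D(U(R))\ge D\!\left(U(R)/(1+\mathfrak{m}^{\,e-v})\right)+v$, which is precisely the slack needed to find it.

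For general $k$ the same three cases must be run simultaneously in all coordinates, and this coupling is where I expect the real difficulty to lie. Call a coordinate \emph{absorbing} if $v_i=\infty$ and \emph{finite} otherwise. The subsequence $W$ one removes must, in every finite coordinate, remain a unit and have product congruent to $1$ modulo $\mathfrak{m}_i^{\,e_i-v_i}$, while in every absorbing coordinate it must not drop the valuation below $e_i$. Insisting that $W$ be a unit in every coordinate cleanly settles the absorbing coordinates, since a unit cannot turn a zero coordinate-product into a nonzero one; but a single term couples all coordinates, and when several absorbing coordinates each contain many non-unit terms the supply of globally-unit terms available for $W$ can be too small. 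The crux is therefore a simultaneous budgeting argument: first saturate the absorbing coordinates using few terms (at most $e_i$ per coordinate), then run the quotient-group cleanup of the previous paragraph on the remaining coordinates, and finally verify, by summing Lemma \ref{Lemma recusive Davenport constant} over the $k$ factors, that the combined cost $\sum_i(\text{saturation})+D\!\left(\prod_i U(R_i)/(1+\mathfrak{m}_i^{\,e_i-v_i})\right)$ never exceeds $D(U(\mathcal{S}_{f(x)}^p))$.

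I expect this last inequality, required uniformly over all admissible data $(\deg p_i,e_i)$ with $p>2$, to be the main obstacle, which is consistent with the statement being offered only as a conjecture. My honest first target would be the two-factor case, both to confirm that the naive budget always closes and to see whether some configurations force one to strengthen the induction hypothesis to the per-factor identities $D(\mathcal{S}_i)=D(U(\mathcal{S}_i))$ before multiplying the factors back together.
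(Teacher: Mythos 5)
You should first be clear about the status of this statement: it is Conjecture~\ref{Conjecture 1}, which the paper does \emph{not} prove. The authors only establish the squarefree case (Theorem~\ref{Theorem irreducible quotient ring}, via Lemma~\ref{Lemma product of cyclic semigroups}) and the single example $f(x)=(x+1)^2$ (Proposition~\ref{Theorem quotient ring}), so there is no proof of the conjecture in the paper to compare against, and your proposal must be judged on its own. As written it is a program with an acknowledged hole, not a proof. Its single-factor (chain-ring) part, however, is essentially correct and completable: the stratification by the valuation $v$ of $\sigma(T)$, the treatment of $v=0$ and $v=\infty$ (including your observation that discarding a smallest-valuation term works because $D(U(R))>e$, and that $p>2$ is genuinely needed there --- your $p=2$ counterexample is exactly why the conjecture carries the hypothesis $p>2$), and the intermediate case via a zero-sum in $U(R)/(1+\mathfrak{m}^{e-v})$ all check out. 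For the needed bound $D(U(R))\ge D\bigl(U(R)/(1+\mathfrak{m}^{e-v})\bigr)+v$ you do not even need Olson's theorem: Lemma~\ref{Lemma recusive Davenport constant} plus the elementary zero-sum-free lower bound $D(H)\ge 1+(p-1)\log_p|H|$ for a finite abelian $p$-group $H$ suffices. This part alone already proves the conjecture for all $f=g(x)^e$, strictly more than Proposition~\ref{Theorem quotient ring}.

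The genuine gap is the multi-factor case, which you correctly flag but, in my view, misdiagnose: the coupling difficulty is self-inflicted, caused by insisting that the removed subsequence $W$ be a unit in \emph{every} coordinate. It need not be. Follow the paper's proof of Lemma~\ref{Lemma product of cyclic semigroups} more closely: for each absorbing coordinate $i$ keep (do not remove) a saturating subsequence $V_i$ of at most $e_i$ terms whose $i$-valuations sum to at least $e_i$; then any $W$ disjoint from $V=\bigvee_i V_i$ leaves those coordinates equal to the zero element regardless of how its terms behave there. Likewise keep the at most $v_i$ terms that are non-units in each finite coordinate $i$ (their valuations sum to $v_i=v(\sigma(T)_i)$), and choose $W$ among the remaining terms, demanding only that its image in $G_F=\prod_{i\in F}U(R_i)/(1+\mathfrak{m}_i^{\,e_i-v_i})$ ($F$ the set of finite coordinates) be zero-sum; removing such a $W$ fixes $\sigma(T)$ in every coordinate. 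The budget then closes coordinate-by-coordinate against Lemma~\ref{Lemma recusive Davenport constant}: an absorbing coordinate costs at most $e_i$ retained terms and contributes a saving of $D(U(R_i))-1\ge (q_i-2)+(p-1)(e_i-1)\ge e_i$ (this is where $q_i\ge 3$, i.e.\ $p>2$, enters), while a finite coordinate costs at most $v_i$ and saves $D(1+\mathfrak{m}_i^{\,e_i-v_i})-1\ge (p-1)v_i\ge v_i$. Summing, the number of terms available for $W$ is at least $D(G_F)$, so the required nonempty $W$ exists. In short: your framework does prove the conjecture, and the missing idea is only that absorbing coordinates should be paid for with retained terms, exactly as in Lemma~\ref{Lemma product of cyclic semigroups}, rather than with unit constraints on the removed ones.
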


From Theorem \ref{Theorem irreducible quotient ring}, we need only to verify the case that there exists some irreducible polynomial $g(x)\in \F_p[x]$ with $g(x)^2\mid f(x)$. Therefore, we close this paper by making a preliminary verification for example when $f(x)=(x+1)^2$.

\begin{prop}\label{Theorem quotient ring}
\ For a prime $p>2$,   $$D(\mathcal{S}_{(x+1)^2}^p)=D(U(\mathcal{S}_{(x+1)^2}^p)).$$
\end{prop}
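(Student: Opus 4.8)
The plan is to make both sides of the claimed equality completely explicit. First I would pass, via the substitution $y=x+1$, to the isomorphic ring $R=\F_p[y]/\langle y^2\rangle$, so that $\mathcal{S}_{(x+1)^2}^p$ is realized as the multiplicative semigroup of the ring of dual numbers $\{a+by:a,b\in\F_p\}$ with $y^2=0$. In this model an element $a+by$ is a unit precisely when $a\neq 0$, so $U(\mathcal{S}_{(x+1)^2}^p)=\{a+by:a\in\F_p^*,\ b\in\F_p\}$ has order $p(p-1)$. The constant-term map $a+by\mapsto a$ is a surjective homomorphism onto $\F_p^*\cong C_{p-1}$ whose kernel $\{1+by:b\in\F_p\}$ is isomorphic to $C_p$ (since $(1+by)(1+cy)=1+(b+c)y$); as $\gcd(p,p-1)=1$, this gives $U(\mathcal{S}_{(x+1)^2}^p)\cong C_p\times C_{p-1}\cong C_{p(p-1)}$, whence $D\bigl(U(\mathcal{S}_{(x+1)^2}^p)\bigr)=p(p-1)$. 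By Lemma \ref{proposition D(U(G))leq D(G)} this already yields the lower bound $D(\mathcal{S}_{(x+1)^2}^p)\geq p(p-1)$, so the whole task reduces to proving $D(\mathcal{S}_{(x+1)^2}^p)\leq p(p-1)$. (Note that Lemma \ref{Lemma product of cyclic semigroups} does not apply here, since $R$ is local rather than a product of fields; a fresh argument is needed.)

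For the upper bound I would show that every irreducible sequence $T$ has length at most $p(p-1)-1$, using the structural fact that the non-units form the maximal ideal $\mathfrak{m}=\{0,y,2y,\ldots,(p-1)y\}$ with $\mathfrak{m}^2=0$: the product of any two non-units is $0$, while a non-unit absorbs a unit by $by\cdot(c+dy)=bc\,y$. Writing $m$ for the number of non-unit terms of $T$, I would argue case by case. (i) If $m=0$, then $T$ is a sequence over $U(\mathcal{S}_{(x+1)^2}^p)$, and irreducibility means $T$ has no nonempty product-one subsequence, forcing $|T|\leq D(U(\mathcal{S}_{(x+1)^2}^p))-1=p(p-1)-1$. (ii) If $m\geq 2$ then $\sigma(T)=0$, and deleting a non-unit (when $m\geq 3$) or a unit (when one is present), or deleting one term when a $0$ occurs, again gives product $0$; hence an irreducible such $T$ consists of exactly two nonzero non-units, so $|T|=2$. (iii) If $m=1$ with the non-unit equal to $0$, the same deletion argument forces $|T|=1$.

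The genuinely interesting case, and the one I expect to be the main obstacle, is (iv): $m=1$ with the non-unit term $by$, $b\neq 0$. Here $\sigma(T)=b\bigl(\prod_i c_i\bigr)y$, where the $c_i\in\F_p^*$ are the constant terms of the unit terms $c_i+d_iy$. Any proper subsequence omitting $by$ evaluates to a unit and cannot match the non-unit value $\sigma(T)$, so a competing proper subsequence must contain $by$ together with a subcollection of the units; such a subsequence reproduces $\sigma(T)$ exactly when the constant terms of the omitted units multiply to $1$ in $\F_p^*$. Thus irreducibility of $T$ is equivalent to the sequence $(c_i)$ having no nonempty product-one subsequence in $\F_p^*\cong C_{p-1}$, which bounds the number of units by $D(C_{p-1})-1=p-2$ and gives $|T|\leq p-1$. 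The delicate point throughout is that multiplication by $by$ only detects the constant term of each unit, so reducibility in the full semigroup must be translated precisely into a zero-sum condition in the smaller group $C_{p-1}$. Since $p-1\leq p(p-1)-1$ for $p>2$, every case yields $|T|\leq p(p-1)-1$, so $D(\mathcal{S}_{(x+1)^2}^p)\leq p(p-1)$, completing the proof.
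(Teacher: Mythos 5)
Your proof is correct, and while it shares the paper's outer skeleton (a case split on the number of terms lying outside the unit group, with the all-units case and the case of two or more non-units handled just as in the paper), your treatment of the decisive case --- exactly one non-unit term $m(x+1)=by$ with $b\neq 0$ --- is genuinely different. The paper fixes $|T|=D(U(\mathcal{S}_{(x+1)^2}^p))$, assumes $T{\bf a}_1^{-1}$ is irreducible, and invokes the structural fact that a zero-sum free sequence of length $D(G)-1$ over a finite abelian group $G$ has $\sum(T{\bf a}_1^{-1})=G\setminus\{0_G\}$ (asserted there with a bare ``it follows that''); this lets it extract a nonempty subsequence $W$ with $\sigma(W)=x+2=1+y$, and since $(x+2)\cdot m(x+1)=m(x+1)$ in the ring, deleting $W$ leaves the product unchanged. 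You instead observe that multiplication by $by$ only detects the constant terms $c_i$ of the unit terms, so reducibility of $T$ is \emph{equivalent} to the sequence $(c_i)$ having a nonempty product-one subsequence in $\F_p^{*}\cong C_{p-1}$; this bounds the number of unit terms by $D(C_{p-1})-1=p-2$ and gives $|T|\leq p-1$ in this case. Your route is more elementary, needing only $D(C_{p-1})=p-1$ and $D(C_{p(p-1)})=p(p-1)$ rather than the maximal-zero-sum-free structure theorem, and it yields sharper information (irreducible sequences containing a nonzero nilpotent term have length at most $p-1$, far below $p(p-1)-1$); the paper's route is shorter if one grants that structural fact, and it works with $D(U(\mathcal{S}_{(x+1)^2}^p))$ abstractly, never needing your explicit identification $U(\mathcal{S}_{(x+1)^2}^p)\cong C_p\times C_{p-1}\cong C_{p(p-1)}$ (the paper only uses the crude lower bound $D(U(\mathcal{S}_{(x+1)^2}^p))\geq p$ to guarantee properness of the deleted subsequence). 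Your parenthetical remark that Lemma \ref{Lemma product of cyclic semigroups} cannot be applied here, because $\F_p[y]/\langle y^2\rangle$ is local rather than a product of fields, is exactly the point the paper makes when motivating this proposition.
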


\begin{proof} \
In view of Lemma \ref{proposition D(U(G))leq D(G)},
we need only to show that $D(S_{(x+1)^2}^p)\leq
D(U(S_{(x+1)^2}^p)).$ Take an arbitrary sequence $T$ of elements in the semigroup $S_{(x+1)^2}^p)$ with length
\begin{equation}\label{equation length of T}
|T|=D(U(S_{(x+1)^2}^p)).
\end{equation} It suffices to show that $T$ is reducible. Note
that
\begin{equation}\label{equation units}
U(S_{(x+1)^2}^p)=\{ax+b: a,b\in \F_p \mbox{ and }a\neq b\}.
\end{equation}
Let $g$ be a primitive root of the prime $p$. Take the sequence $V={\bf a}_1
{\bf a}_2\cdot\ldots \cdot {\bf a}_{p-1}$ of elements in $S_{(x+1)^2}^p$, where ${\bf a}_1=x$ and ${\bf a}_2= \cdots ={\bf a}_{p-1} =g$. It is easy to check that $V$ is irreducible, which implies
\begin{equation}\label{equation length of davenport}
D(U(S_{(x+1)^2}^p)) \geq |T|+1=p.
\end{equation}

Suppose first that all the terms of $T$ are from
$U(S_{(x+1)^2}^p)$. By \eqref{equation length of T}, we have that $T$ is  reducible, we are done.

Suppose that $T$ contains two terms, say ${\bf a}_1,{\bf a}_2$,
which are not in $U(S_{(x+1)^2}^p)$. By \eqref{equation units}, we have that $(x+1)^2$ divides the product of the two polynomials ${\bf a}_1$ and ${\bf a}_2$, that is,
the sum of the two elements ${\bf a}_1$ and ${\bf a}_2$ is the zero element of the semigroup
$S_{(x+1)^2}^p$, i.e.,
$${\bf a}_1+{\bf a}_2=\infty_{S_{(x+1)^2}^p}.$$
Then $\sigma(T)=\sigma(T\cdot {\bf a}_1^{-1}{\bf a}_2^{-1})+\sigma({\bf a}_1{\bf a}_2)=\sigma(T\cdot {\bf a}_1^{-1}{\bf a}_2^{-1})+\infty_{S_{(x+1)^2}^p}=\infty_{S_{(x+1)^2}^p}=\sigma({\bf a}_1{\bf a}_2)$. Combined with \eqref{equation length of davenport}, we have that ${\bf a}_1{\bf a}_2$ is the required {\bf proper} subsequence of $T$, we are done.

It remains to consider the case that
$T$ contains exactly one element outside the group $U(S_{(x+1)^2}^p)$, say $${\bf a}_1\in S_{(x+1)^2}^p\setminus U(S_{(x+1)^2}^p) \mbox{ and }{\bf a}_i\in U(S_{(x+1)^2}^p) \mbox{ for  }i=2,3,\ldots, |T|.$$
By \eqref{equation units}, we have that
$${\bf a}_1=m(x+1),$$ where
$m\in
\F_p$. We may
assume $T{\bf a}_1^{-1}$ is irreducible (otherwise, $T$ shall be reducible, and we are done). By the definition of irreducible sequences, we have that
$$0_{S_{(x+1)^2}^p}=0_{U(S_{(x+1)^2}^p)}\notin
\sum(T{\bf a}_1^{-1}),$$ where the set $$\sum(T{\bf a}_1^{-1})=\{\sigma(T'): ~T' ~{\rm is~ a ~ nonempty ~ subsequence ~ of }~ T{\bf a}_1^{-1} \}$$ consists of all the elements of the semigroup $S_{(x+1)^2}^p$ that can be represented by the sum of several distinct terms from the sequence $T{\bf a}_1^{-1}$. We see that the sequence $T{\bf a}_1^{-1}$ is a zero-sum free sequence of elements in the group $U(S_{(x+1)^2}^p)$ of length $D(U(S_{(x+1)^2}^p))-1$. It follows that $$\sum(T{\bf a}_1^{-1}) =U(S_{(x+1)^2}^p)\setminus \{0_{U(S_{(x+1)^2}^p)}\},$$
which implies that
there exists a {\bf nonempty} subsequence $W$ of $T{\bf a}_1^{-1}$ with $\sigma(W)=x+2\in \F_p[x]$.
We see that
$$\sigma(W)+{\bf a}_1=(x+2)* m(x+1)=m(x+1)={\bf a}_1.$$ Let $T'=TW^{-1}$. Then we have
that $$\sigma(T')=\sigma(T'{\bf a}_1^{-1})+{\bf a}_1=\sigma(T'{\bf a}_1^{-1})+({\bf a}_1+\sigma(W))=\sigma(T')+\sigma(W)=\sigma(T),$$ and thus, $T'$ is the required proper subsequence of $T$. This completes the proof.
\end{proof}

\bigskip

\noindent {\bf Acknowledgements}

\noindent  This work is supported by NSFC (61303023, 11301381, 11301531, 11371184,  11471244), Science and Technology Development Fund of Tianjin Higher
Institutions (20121003), Doctoral Fund of Tianjin Normal University (52XB1202), NSF of Henan Province (grant no. 142300410304).

\bigskip

\end{document}